\newtheorem{theorem}{Theorem} \rm
\newtheorem{lemma}[theorem]{Lemma}
\newtheorem{definition}[theorem]{Definition}
\numberwithin{theorem}{section}
\newcommand{\per}{{\rm per}}
\newcommand{\qed}{\hfill \ensuremath{\Box}}
\begin{document}
\title{\bf Dense Eulerian graphs are $(1, 3)$-choosable}

\author{
	Huajing Lu\thanks{Department of Mathematics, Zhejiang Normal University, China.
		e-mail: {\tt huajinglu@zjnu.edu.cn
		}} \thanks{College of Basic Science, Ningbo University of Finance and Economics, China.}\and
	Xuding Zhu\thanks{Department of Mathematics, Zhejiang Normal University, China.
		e-mail: {\tt  xdzhu@zjnu.edu.cn
		}} \thanks{Grant Numbers:  NSFC 11971438,12026248, U20A2068.}
	}

\maketitle

\begin{abstract}
A graph $G$ is total weight $(k,k')$-choosable if for any total list assignment $L$ which assigns to each vertex $v$ a set $L(v)$ of $k$ real numbers, and each edge $e$ a set $L(e)$ of $k'$ real numbers, there is a proper total $L$-weighting, i.e., a mapping $f: V(G) \cup E(G) \to \mathbb{R}$ such that for each $z \in V(G) \cup E(G)$, $f(z) \in L(z)$, and for each edge $uv$ of $G$,  $\sum_{e \in E(u)}f(e)+f(u) \ne \sum_{e \in E(v)}f(e) + f(v)$.  
This paper proves that if $G$ decomposes into   complete graphs of odd order, then $G$ is total weight $(1,3)$-choosable. As a consequence, every Eulerian graph $G$ of large order and with minimum degree at least $0.91|V(G)|$ is total weight $(1,3)$-choosable. We also prove that any graph $G$ with minimum degree at least $0.999|V(G)|$ is total weight $(1,4)$-choosable.
\end{abstract}
\par\bigskip\noindent
 {\em Keywords:} Total weight choosability; $1$-$2$-$3$ conjecture; Combinatorial Nullstellensatz; Inner product.

\section{Introduction}
Assume $G=(V,E)$ is a graph with vertex set $V=\{1, 2, \dots, n\}$.  Each edge $e \in E$ of $G$ is   $2$-subset $e=\{i, j\}$ of  $V$. A {\em total weighting} of $G$ is a mapping $\phi$: $V\cup E\rightarrow \mathbb{R}$. A total weighting $\phi$ is {\em proper} if for any edge $\{i, j\}\in E$,
$$\sum_{e\in E(i)}\phi(e)+\phi(i)\ne \sum_{e\in E(j)}\phi(e)+\phi(j).$$

A proper total weighting $\phi$ with $\phi(i)=0$ for all vertices $i$ is also called a {\em vertex coloring edge weighting}. A   vertex coloring  edge weighting  of   $G$ using weights $\{1, 2, \dots, k\}$ is called a {\em vertex coloring $k$-edge weighting}. Note
that if $G$ has an isolate edge, then $G$ does not admit a vertex coloring edge weighting. We say a graph is {\em nice} if it does not contain any isolated edge.

Karo\'{n}ski,  {\L}uczak and  Thomason  \cite{KLT2004} conjectured that every nice graph has a vertex coloring $3$-edge weighting. This conjecture received considerable attention \cite{Add2005,Add2007,Grytczuk2021,KKP10,TWZ2016,WY2008,Zhong}, and it is known as the $1$-$2$-$3$ conjecture. The best result on $1$-$2$-$3$ conjecture so far was obtained  by Kalkowski, Karo\'{n}ski and   Pfender \cite{KKP10},  who proved that every nice graph has a vertex coloring $5$-edge weighting.

The list version of edge weighting of graphs was introduced by 
Bartnicki, Grytczuk and   Niwczyk  \cite{BGN09}. The list version of total weighting of graphs was introduced independently by Przyby\l o and  Wo\'{z}niak in \cite{PW2010} 
and by Wong and Zhu in \cite{WZ11}. Let $\psi: V\cup E\to \mathbb{N}^+$. 
A $\psi$-list assignment of $G$ is a mapping $L$ which assigns 
to $z\in V\cup E$ a set $L(z)$ of $\psi(z)$ real numbers. 
Given a total list assignment $L$, a {\em proper $L$-total weighting} 
is a proper total weighting $\phi$ with $\phi(z)\in L(z)$ for all $z\in V\cup E$. We say $G$ is {\em total weight $\psi$-choosable} ($\psi$-choosable for short) if for any $\psi$-list assignment $L$, 
there is a proper $L$-total weighting of $G$. We say $G$ is 
{\em total weight $(k, k')$-choosable} ($(k, k')$-choosable for short)
if $G$ is $\psi$-total
weight choosable, where  $\psi(i)= k$ for $i \in V (G)$ and  $\psi(e) = k'$ for $e\in E(G)$.

List version of edge weighting also received a 
lot of attention \cite{BGN09,Cao,CDWZ2017,DDWWWYZ2019,PW2011,TWZ2016,Wong2021,WZ11,WZ2012,Zhu}.  
As strengthenings of the 1-2-3 conjecture, 
it was conjectured  in \cite{WZ11} that every nice graph is $(1, 3)$-choosable.  
A weaker conjecture was also proposed in \cite{WZ11}, which asserts that there is a constant $k$ such that every nice graph is $(1, k)$-choosable.
This weaker conjecture was recently confirmed by Cao \cite{Cao}, who proved that every
nice graph is $(1, 17)$-choosable.  This result was improved in \cite{Zhu}, where it was shown that
  every nice graph is $(1, 5)$-choosable.

Given a graph $G$ and a family of graphs $\mathcal{H}$, we say that $G$ has an $\mathcal{H}$-decomposition, if the edges of $G$ can be  partitioned into the edge sets of copies of graphs from $\mathcal{H}$. In particular, a triangle decomposition of $G$ is a partition of $E(G)$ into triangle, and for a given graph $H$, an $H$-decomposition of $G$ partitions $E(G)$ into subsets, each inducing a copy of $H$.   The following is the main result of this paper.

\begin{theorem}
	\label{thm-main}
	If $E(G)$ can be decomposed into cliques of odd order, then $G$ is   $(1, 3)$-choosable.
\end{theorem}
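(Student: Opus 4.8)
The plan is to invoke the Combinatorial Nullstellensatz. Because every list $L(v)$ is a singleton, the vertex weight $\phi(v)$ is forced, so a proper $L$‑total weighting is merely a choice of edge weights $f(e)\in L(e)$ for which the colours $c(v)=\phi(v)+\sum_{e\ni v}f(e)$ differ on adjacent vertices. Consider
$$
P_G(\mathbf x)=\prod_{uv\in E(G)}\Bigl(\sum_{e\ni u}x_e+\phi(u)-\sum_{e\ni v}x_e-\phi(v)\Bigr),
$$
a polynomial in the variables $\{x_e:e\in E(G)\}$. By the Combinatorial Nullstellensatz it suffices to find a monomial $\prod_e x_e^{d_e}$ with $d_e\le 2$ for all $e$, with $\sum_e d_e=\deg P_G$, and with non‑zero coefficient. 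Since $E(G)$ is covered by cliques of order at least $3$, $G$ has no isolated edge; hence each factor equals a non‑zero linear form plus a constant, $\deg P_G=|E(G)|$, and the top‑degree part of $P_G$ is exactly $\widehat P_G(\mathbf x)=\prod_{uv\in E(G)}(\widehat L_u-\widehat L_v)$ with $\widehat L_v=\sum_{e\ni v}x_e$. Thus $\widehat P_G$ is homogeneous of degree $|E(G)|$, the $\phi(v)$ disappear from its coefficients, and the whole problem reduces to the $\phi$‑free statement: \emph{$\widehat P_G$ has a monomial with all exponents $\le 2$ and non‑zero coefficient.} (We note in passing that $G$ is automatically Eulerian: every clique of odd order contributes an even degree at each of its vertices.)

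Next I would analyse a single clique $Q=K_{2k+1}$ on vertices $v_0,\dots,v_{2k}$. The linear forms $\widehat L_{v_0},\dots,\widehat L_{v_{2k}}$ are linearly independent (the incidence matrix of an odd complete graph has full rank), so
$$
\widehat P_Q=\prod_{i<j}(\widehat L_{v_i}-\widehat L_{v_j})=\pm\sum_{\tau}\operatorname{sgn}(\tau)\,\prod_i\widehat L_{v_i}^{\,\tau(i)}
$$
is, up to sign, a Vandermonde determinant in these forms. Expanding $\widehat L_{v_i}^{\,\tau(i)}=\bigl(\sum_{j\ne i}x_{v_iv_j}\bigr)^{\tau(i)}$ and reading off a monomial $\prod_{i<j}x_{v_iv_j}^{\,d_{ij}}$ expresses its coefficient as a signed sum, indexed by the ways of splitting each exponent $\tau(i)$ among the edges at $v_i$ so as to realise $(d_{ij})$, of products of multinomial coefficients. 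The first key step is a lemma: \emph{for $Q$ of odd order there is such a $(d_{ij})$ with every $d_{ij}\le 2$, $\sum_{i<j}d_{ij}=\binom{2k+1}{2}$, and non‑zero coefficient — and moreover with enough freedom at each vertex (e.g.\ in which incident edge receives which exponent) to be chosen compatibly with the cliques meeting $Q$.} Oddness is essential here; for even cliques the relevant coefficients cancel in the Vandermonde sum, e.g.\ $[x_{01}^2x_{02}^2x_{03}^2]\,\widehat P_{K_4}=0$. For $K_3$ the good monomials are exactly the six $x_{e_1}^{2}x_{e_2}^{1}x_{e_3}^{0}$ with $\{e_1,e_2,e_3\}=E(K_3)$, each of coefficient $\pm1$.

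With the decomposition $E(G)=E(Q_1)\cup\dots\cup E(Q_m)$ into odd cliques, I would split, for $uv\in E(Q_j)$, the factor $\widehat L_u-\widehat L_v=\delta_j(uv)+\rho_j(uv)$, where $\delta_j(uv)$ involves only edges of $Q_j$ and $\rho_j(uv)=\sum_{Q'\ne Q_j}\bigl(\widehat c_{Q'}(u)-\widehat c_{Q'}(v)\bigr)$ with $\widehat c_{Q'}(w)=\sum_{e\in E(Q'),\,e\ni w}x_e$; the crucial point is that no clique other than $Q_j$ contains both $u$ and $v$, so for each $Q'$ at most one of $\widehat c_{Q'}(u),\widehat c_{Q'}(v)$ is non‑zero in $\rho_j(uv)$. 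Pick, for each $Q_j$, a good monomial $m_j$ from the lemma — chosen compatibly at the vertices shared with other cliques — and set $m=\prod_j m_j$. Expanding $\widehat P_G$, the coefficient $[m]\widehat P_G$ becomes a signed count of ``functional pointings'' $\psi\colon E(G)\to E(G)$ with $\psi(e)$ adjacent to $e$ and $|\psi^{-1}(g)|=d_g$ for every $g$. The heart of the proof is to show that every $\psi$ using a cross‑clique choice (i.e.\ $\psi(e)\notin E(Q_j)$ for some $e\in E(Q_j)$) contributes $0$ to this sum — via a rigidity argument inside each clique (the prescribed exponents together with $d_g\le 2$ greatly restrict who may point to a given $g$) reinforced by a sign‑reversing involution on whatever cross‑clique pointings survive. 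Granting this, $[m]\widehat P_G=\prod_{j=1}^m[m_j]\widehat P_{Q_j}\ne 0$, and the theorem follows.

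The step I expect to be the main obstacle is precisely this last one: two cliques of the decomposition are always edge‑disjoint but may share arbitrarily many vertices, so the factorisation $[m]\widehat P_G=\prod_j[m_j]\widehat P_{Q_j}$ is not automatic and must be \emph{engineered} — this is exactly where the ``compatible choice at shared vertices'' freedom in the single‑clique lemma has to be exploited, and where an explicit cancellation (or inner‑product/permanent) argument for the surviving cross terms is needed. A secondary difficulty is proving the single‑clique lemma in the sharp form demanded by the gluing, rather than merely the bare existence of some good monomial for $\widehat P_{K_{2k+1}}$.
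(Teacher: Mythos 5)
Your write-up is a plan, not a proof: the two steps you yourself flag as the main obstacles are exactly the ones that carry all the content, and neither is established. First, the single-clique lemma is only asserted; the ``moreover with enough freedom at each vertex \dots to be chosen compatibly with the cliques meeting $Q$'' clause is never defined precisely, let alone proved, and it is unclear what compatibility condition would even suffice. Second, and more seriously, the factorisation $[m]\widehat P_G=\prod_j[m_j]\widehat P_{Q_j}$ requires showing that all cross-clique contributions cancel; your proposed ``rigidity argument reinforced by a sign-reversing involution on whatever cross-clique pointings survive'' is a hope rather than an argument, and since distinct cliques of the decomposition can share arbitrarily many vertices, there is no reason such cancellation holds for a specific monomial. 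You are aiming at a strictly stronger statement than the theorem needs (a \emph{particular} monomial with exponents $\le 2$ and nonzero coefficient), which is why you run into this wall.

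The paper takes a route that avoids the cancellation problem entirely. Using the inner-product identity $\per(AB^{*})=\langle f_A,f_B\rangle$ (from Cao's work), it suffices to show that $f_{A_G}=\prod_{\{i,j\}\in E}(x_i-x_j)$ lies in the span $W_E^K$ of the products $\prod_{\{i,j\}\in E}(x_i+x_j)^{K'(e)}$ with $K'\le K$ and $K(e)\le 2$: then $\langle f_{A_G},f_{A_G}\rangle>0$ already guarantees that \emph{some} $K'\le K$ has $x^{K'}\in{\rm mon}(P_G)$, which is all the Combinatorial Nullstellensatz needs — no explicit coefficient is ever computed and no cross terms need to vanish. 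Membership in $W_E^K$ is achieved by a ``path covering family'': for each edge $\{i,j\}$ an even-length path from $i$ to $j$ turns $x_i-x_j$ into an alternating sum of terms $x_{i_l}+x_{i_{l+1}}$, and this construction is trivially multiplicative across an edge-decomposition, so the gluing issue disappears. Oddness of the cliques enters only through an explicit construction in $K_n$ ($n$ odd): each edge $\{i,j\}$ gets the length-$2$ path through its unique ``midpoint'' $t_{i,j}$ modulo $n$, and each edge lies on exactly two such paths, giving $K(e)=2$. If you want to salvage your approach, you would either need to prove the cross-term cancellation (no known argument does this directly), or switch to the weaker ``some $K'\le K$'' target, at which point the inner-product/span argument above is the natural tool.
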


As a consequence of Theorem \ref{thm-main}, we prove the following result.

\begin{theorem}
\label{dense}
If $G$ is an $n$-vertex Eulerian graph with minimum degree at least $0.91 n$ and $n$ sufficiently large, then $G$ is $(1,3)$-choosable. 
\end{theorem}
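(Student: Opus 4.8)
The plan is to deduce Theorem~\ref{dense} from Theorem~\ref{thm-main} by showing that a dense Eulerian graph decomposes into cliques of odd order. More precisely, I will show that the edge set of such a $G$ can be partitioned into triangles together with at most two copies of $K_5$ — all of which are complete graphs of odd order. The reduction relies on two ingredients: the easy observation that the divisibility conditions needed for a triangle decomposition are essentially automatic for Eulerian graphs, and the deep results on triangle decompositions of graphs of large minimum degree proved via iterative absorption.

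For the first ingredient, recall that a graph $H$ can admit a $K_3$-decomposition only if it is \emph{$K_3$-divisible}, i.e.\ $3 \mid e(H)$ and every vertex of $H$ has even degree (each triangle through a vertex uses two edges at that vertex). Conversely, by Dross's bound on the fractional triangle decomposition threshold combined with the iterative absorption framework of Barber, K\"uhn, Lo and Osthus, for every $\varepsilon>0$ there is an $n_0$ such that every $K_3$-divisible graph $H$ on $n\ge n_0$ vertices with $\delta(H)\ge(0.9+\varepsilon)n$ has a triangle decomposition. I will invoke this statement as a black box. Since $G$ is Eulerian, every vertex of $G$ has even degree, so the only possible obstruction to $K_3$-divisibility of $G$ is the residue of $e(G)$ modulo $3$.

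For the second ingredient, I would remove this obstruction using copies of $K_5$. Because $\delta(G)\ge 0.91n>\tfrac34 n$, Tur\'an's theorem (or the standard minimum-degree bound forcing $K_5$) guarantees a copy of $K_5$ in $G$; deleting its edges lowers each affected degree by $4$, hence keeps all degrees even, and lowers the minimum degree by at most $4$, so the resulting graph is still dense enough to contain another copy of $K_5$. As $\binom{5}{2}=10\equiv 1\pmod 3$, I choose $r\in\{0,1,2\}$ with $e(G)-10r\equiv 0\pmod 3$ and delete the edges of $r$ pairwise edge-disjoint copies of $K_5$, obtaining a graph $G'$ with $3\mid e(G')$, all degrees even, and $\delta(G')\ge 0.91n-8\ge 0.905n$ once $n$ is large. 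Thus $G'$ is $K_3$-divisible with $\delta(G')\ge(0.9+0.005)n$, so for $n$ sufficiently large the cited theorem gives a triangle decomposition of $G'$. Appending the $r\le 2$ deleted copies of $K_5$ yields a decomposition of $E(G)$ into complete graphs of odd order, and Theorem~\ref{thm-main} then shows that $G$ is $(1,3)$-choosable.

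The only genuinely hard input here is the triangle-decomposition result for dense graphs, which is used entirely as a black box; the remaining work — verifying $K_3$-divisibility, checking that even degrees are preserved under the $K_5$-deletions, and absorbing the $O(1)$ loss in minimum degree into the slack between $0.91$ and $0.9$ — is routine. In particular, the constant $0.91$ is dictated exactly by the $0.9n$ threshold for (fractional) triangle decompositions plus a fixed amount of slack, and any improvement of that threshold would immediately improve the constant in Theorem~\ref{dense}.
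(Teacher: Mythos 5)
Your proposal is correct and follows essentially the same route as the paper: fix the divisibility modulo $3$ by deleting at most two copies of $K_5$ (the paper uses vertex-disjoint $5$-cliques, you use edge-disjoint ones justified by the Tur\'an-type degree bound, which works equally well since degrees stay even), then apply the Barber--K\"uhn--Lo--Osthus triangle-decomposition theorem at the $(0.9+\epsilon)n$ threshold and conclude via Theorem~\ref{thm-main}. No substantive difference from the paper's argument.
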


In \cite{Zhong},   Zhong confirmed the $1$-$2$-$3$ conjecture for graphs that can be edge-decomposed into cliques of order at least 3. As a consequence of this result, it was proved in \cite{Zhong} that the 1-2-3 conjecture holds for every $n$-vertex graph with minimum degree at least $0.99985n$, where $n$ is sufficiently large.

Our result is a list version of Zhong's result, but with one degree    restriction:  $E(G)$ needs to be decomposed into complete graphs of odd order.  Hence we can only show that dense Eulerian graphs are $(1,3)$-choosable. For general dense graphs, we prove the following result:

\begin{theorem}
\label{thm-dense2}
If $G$ is an $n$-vertex  graph with minimum degree at least $0.999 n$, then $G$ is $(1,4)$-choosable. 
\end{theorem}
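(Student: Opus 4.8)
The plan is to run the Combinatorial Nullstellensatz method behind Theorem~\ref{thm-main}, spending the extra (fourth) value on each edge to absorb a small ``leave''. First I would peel off from $G$ a sparse subgraph $L$ of maximum degree at most $2$ so that $G':=G-E(L)$ decomposes into cliques of odd order. Since $\delta(G)\ge 0.999n$ the complement of $G$ has maximum degree below $0.001n$, so $G$ is connected and, for $n$ large, nice; let $T$ be the even-sized set of odd-degree vertices. Using only the density of $G$, take $L\subseteq G$ to be a matching saturating $T$ together with at most one further short cycle (of length $4$ or $5$), vertex-disjoint from that matching, chosen so that $|E(G)|-|E(L)|\equiv 0\pmod{3}$. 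Then $\Delta(L)\le2$, and $G'=G-E(L)$ has $\delta(G')\ge 0.999n-2$, all degrees even, and $3\mid|E(G')|$; by the known clique-decomposition results for dense graphs (Gustavsson; Barber--K\"uhn--Lo--Osthus; Delcourt--Postle) $G'$ has a triangle decomposition, so in particular $E(G')$ decomposes into odd cliques and Theorem~\ref{thm-main} applies to $G'$ --- although what I actually need is its proof, not merely its conclusion. (If $T=\emptyset$ and $3\mid|E(G)|$ the leave is empty and $G$ is already $(1,3)$-choosable by Theorem~\ref{dense}.)

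Next I would graft $L$ back on at the level of the graph polynomial. For a vertex $v$ write $\phi_G(v)=c_v+\sum_{e\in E_G(v)}x_e$, where $c_v\in L(v)$ is the forced vertex weight and $x_e$ a variable for the edge $e$, and set $y_v=\sum_{e\in E_L(v)}x_e$, a sum of at most two leave-variables ($0$ if $v\notin V(L)$), so that $\phi_G(v)=\phi_{G'}(v)+y_v$ with $\phi_{G'}(v)=c_v+\sum_{e\in E_{G'}(v)}x_e$. The argument proving Theorem~\ref{thm-main}, run on $G'$ with its odd-clique decomposition, produces a full-degree monomial $x^{\mathbf{d}}=\prod_{e\in E(G')}x_e^{\,d_e}$ with every $d_e\le2$ and $\sum_e d_e=|E(G')|$ whose coefficient in $P_{G'}=\prod_{uv\in E(G')}(\phi_{G'}(u)-\phi_{G'}(v))$ is a nonzero number, independent of the $c_v$; the inner-product form of that computation should moreover let me choose $\mathbf{d}$ so that the $G'$-edges meeting $V(L)$ carry small exponents. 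Splitting
\[
P_G=\Bigl[\prod_{uv\in E(G')}\bigl((\phi_{G'}(u)-\phi_{G'}(v))+(y_u-y_v)\bigr)\Bigr]\cdot\Bigl[\prod_{uv\in E(L)}(\phi_G(u)-\phi_G(v))\Bigr],
\]
I would look for a monomial of $P_G$ equal to $x^{\mathbf{d}}$ times a monomial supported on the leave-edges and on the $G'$-edges meeting $V(L)$ that raises the total degree to $|E(G)|$ and in which every exponent is at most $3$ --- the fourth list value being precisely what allows $d_e\le2$ to become $3$ on an edge $e$ at $V(L)$ fed by a perturbation $y_u-y_v$ or by a factor of the second bracket. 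Because $\Delta(L)\le2$, each leave-variable occurs only in a bounded neighbourhood of perturbations and the second bracket is a short product of linear forms in $G'$-variables, so such a monomial exists; being full-degree in $P_G$, its coefficient does not involve the $c_v$, and I would show this coefficient --- a signed sum, organised as an inner product of the ``clique coefficients'' from Theorem~\ref{thm-main} against the ``leave coefficients'' coming from the second bracket --- is nonzero. Combinatorial Nullstellensatz then gives a proper $L$-total weighting for every assignment with $|L(v)|=1$ and $|L(e)|=4$, i.e.\ $G$ is $(1,4)$-choosable.

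I expect the non-vanishing of this coefficient to be the main obstacle. Since $G'$ is almost complete, every vertex of $V(L)$ is $G'$-adjacent to almost all of $V(G)$, so one leave-variable perturbs $\Theta(n)$ of the factors of $P_G$ while the factors $\phi_G(u)-\phi_G(v)$, $uv\in E(L)$, are long sums; the candidate coefficient is thus a large alternating sum. The real work is to keep the set of ``active'' edges small and the contributions of distinct components of $L$ essentially independent (splitting the matching and handling the leave-edges in batches if $|T|$ is large), so that the inner-product bookkeeping isolates a unique surviving term, or a provably non-cancelling family of them. This is also where the generous density hypothesis $0.999n$ --- rather than the $0.91n$ of the Eulerian case --- is spent, and no attempt is made to optimise it.
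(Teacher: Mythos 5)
Your step 1 (peel off a leave $L$ of maximum degree at most $2$ so that $G-E(L)$ is triangle-divisible and hence triangle-decomposable) is sound, but step 2 is not a proof: the entire difficulty of the theorem is concentrated in the claim that some monomial of $P_G$ of the form $x^{\mathbf d}$ times a ``small correction'' has nonzero coefficient, and you explicitly leave this unestablished (``I would show this coefficient \dots is nonzero'', ``I expect the non-vanishing of this coefficient to be the main obstacle''). There is no mechanism in your argument to prevent cancellation: a single leave-variable $x_e$, $e=\{u,v\}\in E(L)$, enters the factor of every $G'$-edge incident to $u$ or $v$, i.e.\ $\Theta(n)$ factors, so the candidate coefficient is a huge signed sum; moreover, when the set $T$ of odd-degree vertices has linear size (which it typically does), $V(L)$ meets essentially every edge of $G'$, so the hoped-for ``small set of active edges'' and the batching of components of $L$ have no basis. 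Note also that the non-vanishing in the paper's machinery comes for free from $\langle f_{A_G},f_{A_G}\rangle>0$ once $f_{A_G}\in W_E^K$ (Lemma \ref{lem-key}); by grafting the leave back at the level of a raw coefficient computation you abandon exactly that safety net, and you would further need control on which exponents of the sufficient monomial for $G'$ are below $2$ near $V(L)$, which Theorem \ref{thm-main} does not provide.

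The paper's route is different and avoids the leave (and hence any cancellation analysis) altogether: instead of triangle decompositions, it applies the Barber--K\"uhn--Lo--Osthus theorem to decompose $G$ into copies of the fixed $7$-edge graph $H$ of Figure \ref{fig1} (two edge-disjoint triangles plus one extra edge), together with at most $6$ triangles to fix $|E(G)| \bmod 7$. Since $H$ has vertices of odd degree and ${\rm gcd}(H)=1$, the parity obstruction that forces you to introduce a leave never arises, and Lemma \ref{lem-3} exhibits a path covering family of $H$ with every edge covered at most $3$ times, so Lemma \ref{lem-cover} gives $(1,4)$-choosability directly. If you want to salvage your decomposition, the natural repair is not a polynomial perturbation but to stay inside the path-covering framework: assign to each leave edge $\{i,j\}$ its own even path of length $2$ through a common $G'$-neighbour, and choose the middle vertices greedily (using the density of $G'$) so that each $G'$-edge gains at most one extra unit of coverage; this keeps all multiplicities at most $3$ and invokes Lemma \ref{lem-cover}, but it requires a careful multiplicity bookkeeping that your proposal does not carry out.
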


\section{Algebraic total weight choosability}

The proof of Theorem \ref{thm-main} applies Combinatorial Nullstellensatz \cite{nullstellensatz} and uses the tools introduced in \cite{Cao} that was further developed in \cite{Zhu}.

 Given a graph $G=(V,E)$, let 
 $$\tilde{P}_G(\{x_z: z \in V \cup E\}) = \prod_{\{i,j\} \in E, i < j}\left(  \left(\sum_{e \in E(i)} x_e+ x_i\right) - \left(\sum_{e \in E(j)} x_e+ x_j\right)\right).$$
 Assign a real number $\phi(z)$ to the variable $x_z$, and view
 $\phi(z)$ as the weight of $z$.
 Let $\tilde{P}_G( \phi  )$ be the evaluation of the
 polynomial at $x_z = \phi(z)$. Then $\phi$ is a proper total
 weighting of $G$ if and only if $\tilde{P}_G( \phi) \ne 0$. Thus the problem of finding a proper $L$-total weighting of $G$ (for a given total list assignment $L$) is equivalent to find a non-zero point of the polynomial $\tilde{P}_G(\{x_z: z \in V \cup E\})$ in the grid $\prod_{z \in V \cup E} L(z)$.
 
 Combinatorial Nullstellensatz gives a sufficient condition for the existence of a non-zero point in a given grid. 
 
We denote by $\mathbb{N}$ and $\mathbb{N}^+$ the set of non-negative integers and the set of positive integers, respectively. For   $m,n \in \mathbb{N}^+$, let $\mathbb{C}[x_1, x_2, \ldots, x_n]_m$ be the vector space of homogeneous polynomials of degree $m$ in variables $x_1, \ldots, x_n$ over the   field $\mathbb{C}$ of complex numbers. We denote by $M_{n,m}(\mathbb{C})$ the set of $n \times m$ matrices with entries in $\mathbb{C}$.

For a finite set $E$, let 
$$\mathbb{N}^E = \{K: E \to \mathbb{N}  \},  \mathbb{N}^E_m = \{K \in \mathbb{N}^E:  \sum_{e \in E} K(e)=m\}.$$ 
Let $$\mathbb{N}^E_{(k^-)} = \{K \in \mathbb{N}^E: K(e) \le k,  \forall e \in E\}.$$
For 
$K \in \mathbb{N}^E$, let 
$$x^K = \prod_{e \in E}x_e^{K(e)}.$$
Let 
$$K!=\prod_{e \in E}K(e)!.$$

Given a  polynomial $P$, we denote the coefficient of the monomial  $x^K$ in the expansion of $P$ by 
$${\rm coe}(x^K, P).$$ 
 Let $${\rm mon}(P)=\{x^K: {\rm coe}(x^K,P) \ne 0\}.$$  
 
 It follows from Combinatorial Nullstellensatz that if $\prod_{z \in V \cup E}x_z^{K(z)} \in {\rm mon}(\tilde{P}_G)$, and $|L(z)| \ge K(z)+1$ for some $K \in \mathbb{N}^{E \cup V}$, then $G$ has a proper total $L$-weighting. 
 
 \begin{definition}
 	 A  graph is said to be {\em algebraic total weight $(k,k')$-choosable} ({\em algebraic  $(k,k')$-choosable} for short) if $x^K=\prod_{z \in V \cup E}x_z^{K(z)} \in {\rm mon}(\tilde{P}_G)$
 	 for some $K \in \mathbb{N}^{E \cup V}_{|E|}$ with $K(i) < k$ for each vertex $i$ and $K(e) < k'$ for each edge $e$.
 \end{definition}
 
  This paper is interested in $(1, b+1)$-choosability of graphs. 
  That is to show that for some $K \in \mathbb{N}_{(b^-)}^E$,  $x^K \in {\rm mon}(  \tilde{P}_G)$. For this purpose, we omit the variables $x_i$ for $i \in V$ and consider the following polynomial:

 $$P_G(\{x_e: e \in E\}) = \prod_{\{i,j\} \in E, i < j}\left(\sum_{e\in E(i)}x_{e} - \sum_{e\in E(j)}x_{e}\right).$$ We say $K$ is {\em sufficient for $G$} if there exists $K' \in \mathbb{N}^E$ such that $K' \le K$ and $x^{K'} \in {\rm mon}(P_G)$. 
 
For a   matrix $A=(a_{ij})_{m \times n}$, define polynomial 
$$F_A(x_1, \ldots, x_n) = \prod_{i=1}^m \sum_{j=1}^n  a_{ij}x_j.$$
Given a graph $G=(V,E)$,  
let 
$C_G=(c_{ee'})_{e,e'\in E}$, where for $e=\{i,j\} \in E, i < j$, 
\[
c_{ee'} = \begin{cases} 1, &\text{ if $e'$ is adjacent with $e$ at $i$}, \cr
-1, &\text{ if $e'$ is adjacent with $e$ at $j$}, \cr
0, &\text{ otherwise.}
\end{cases}
\]
Let 
$A_G=(a_{e i})_{e \in E, i \in V}$, where for $e=\{s,t\} \in E, s < t$, 
\[
a_{ei} = \begin{cases} 1, &\text{ if $i=s$}, \cr
-1, &\text{ if $i=t$}, \cr
0, &\text{ otherwise.}
\end{cases}
\]
and  
$B_G=(b_{ei})_{e \in E, i \in V}$, where   
\[
b_{ei} = \begin{cases} 1, &\text{ if $i$ is incident to $e$}, \cr
0, &\text{ otherwise.}
\end{cases}
\]
It is easy to verify (cf. \cite{Cao}) that $$P_G=F_{C_G}, C_G = A_G (B_G)^T.$$

For a square matrix $A=(a_{ij})_{n \times n}$,   the {\em permanent} $\per(A)$ of $A$ is defined as  
$$\per(A) = \sum_{\sigma}\prod_{i=1}^n a_{ i \sigma(i)},$$ where the summation is over all permutations $\sigma$ of 
$\{1,2,\ldots, n\}$.
For $A \in M_{m,n}(\mathbb{C})$, 
for $K   \in \mathbb{N}^n$ and $K'   \in \mathbb{N}^m$,   $A(K)$ denotes the matrix whose columns consist of $K(i)$ copies of the $i$th column of $A$, and $A[K']$ denotes the matrix whose rows consist  of $K'(i)$ copies of the $i$th row of $A$.

It is known \cite{alontarsi1989,WZ11,WZ2017} and easy to verify that  for any $A \in M_{m,n}(\mathbb{C})$ and $K   \in \mathbb{N}_m^n$,
\begin{eqnarray} 
&&{\rm coe}(x^K, F_A) = \frac{1}{ K!}\per(A(K)). \label{eqn1}
\end{eqnarray}
 
As $ C_G(K) = A_GB_G[K]^T$,  
\begin{eqnarray} 
\label{eqn1b} {\rm coe}(x^K,P_G)=  \frac{1}{ K!}\per(C_G(K)) = \frac{1}{ K!}\per(A_G B_G[K]^T). 
\end{eqnarray}

\section{Proof of Theorems \ref{thm-main}, \ref{dense} and \ref{thm-dense2}}


  Consider the vector space   of homogeneous polynomial of degree $|E|$ in $\mathbb{C}[x_e: e \in E]$. An {\em inner product} in this space is defined as 
  $$\langle f, g\rangle = \sum_{K \in \mathbb{N}^n_m} K! {\rm coe}(x^K,f)\overline{ {\rm coe}(x^K,g)}.$$

  By (\ref{eqn1b}), we are interested in calculating the permanent of matrix of the form $AB^*$, where $B^*$ is the conjugate transpose of $B$. The following lemma was proved in \cite{Cao}.
  
  \begin{lemma}
  For matrices $A, B \in M_{n,m}(\mathbb{C})$, $$\per(A B^*) = \langle f_A, f_B \rangle.$$ 
  \end{lemma}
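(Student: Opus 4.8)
The plan is to evaluate both sides as explicit sums over the multi-indices $K$ that index degree-$n$ monomials in the $m$ variables, and to match them term by term using formula (\ref{eqn1}).

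First I would rewrite the right-hand side. Since $f_A$ and $f_B$ are homogeneous of degree $n$ in $m$ variables, (\ref{eqn1}) gives $\mathrm{coe}(x^K, f_A) = \frac{1}{K!}\per(A(K))$ and $\mathrm{coe}(x^K, f_B) = \frac{1}{K!}\per(B(K))$ for every exponent vector $K$ of total degree $n$. Plugging these into the definition of $\langle\cdot,\cdot\rangle$ and cancelling one factor $K!$ yields
$$\langle f_A, f_B\rangle = \sum_{K}\frac{1}{K!}\,\per(A(K))\,\overline{\per(B(K))},$$
the sum ranging over all $K\colon\{1,\dots,m\}\to\mathbb{N}$ with $\sum_{j}K(j)=n$. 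So it suffices to prove that $\per(AB^*)$ equals this sum.

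Second, I would expand $\per(AB^*)$ directly. Writing $(AB^*)_{ik}=\sum_{j=1}^{m}a_{ij}\overline{b_{kj}}$ and distributing the product in $\per(AB^*)=\sum_{\sigma}\prod_{i=1}^{n}(AB^*)_{i\sigma(i)}$ over the index $i$ and over the inner summation index, I introduce a map $\phi\colon\{1,\dots,n\}\to\{1,\dots,m\}$ recording the chosen column in each factor, obtaining
$$\per(AB^*)=\sum_{\phi}\left(\prod_{i=1}^{n}a_{i\phi(i)}\right)\sum_{\sigma}\prod_{i=1}^{n}\overline{b_{\sigma(i)\phi(i)}}.$$
Then I group the maps $\phi$ by their column-multiplicity vector $K_\phi$, where $K_\phi(j)=|\phi^{-1}(j)|$; each $K_\phi$ is precisely an exponent vector of total degree $n$. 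The key identity is that, for a fixed such $K$, summing $\prod_{i}a_{i\phi(i)}$ over the $\frac{n!}{K!}$ maps $\phi$ with $K_\phi=K$ reproduces $\frac{1}{K!}\per(A(K))$: a bijection between $\{1,\dots,n\}$ and the columns of $A(K)$ projects to such a $\phi$, exactly $K!$ of these bijections project to each $\phi$, and the permanent is unchanged by permuting the (repeated) columns. Similarly, for each fixed $\phi$ with $K_\phi=K$, the sum over $\sigma$ of $\prod_{i}\overline{b_{\sigma(i)\phi(i)}}$ equals $\per(\overline{B}(K))=\overline{\per(B(K))}$, again by invariance of the permanent under permuting rows and columns to pass from the ordered tuple $(\phi(1),\dots,\phi(n))$ to $B(K)$. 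Combining these, $\per(AB^*)=\sum_{K}\frac{1}{K!}\per(A(K))\overline{\per(B(K))}=\langle f_A,f_B\rangle$.

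The main obstacle is the bookkeeping in this regrouping step: one must keep straight the two independent sources of repetition — the map $\phi$, which selects columns with multiplicity, and the permutation $\sigma$, which re-pairs the rows of $B$ — and check that the normalising constant comes out to exactly $1/K!$ with the conjugations landing on the $B$-factor. An alternative and perhaps cleaner route is to note that both sides are multilinear in the rows of $A$ and conjugate-multilinear in the rows of $B$, reduce by multilinearity to the case in which every row of $A$ and of $B$ is a standard basis vector, and verify that case directly: there $f_A=x^{K}$ and $f_B=x^{K'}$ for the corresponding multiplicity vectors, and both $\langle f_A,f_B\rangle$ and $\per(AB^*)$ equal $K!$ when $K=K'$ and $0$ otherwise. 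I would present whichever version keeps the constants most transparent.
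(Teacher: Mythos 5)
Your proof is correct. Note that the paper itself gives no argument for this lemma; it is quoted as a result proved in \cite{Cao}, so there is no in-paper proof to compare against. Your main route is the natural self-contained verification: by (\ref{eqn1}) the right-hand side collapses to $\sum_{K}\frac{1}{K!}\per(A(K))\overline{\per(B(K))}$, and expanding $\per(AB^*)$ with $(AB^*)_{i k}=\sum_j a_{ij}\overline{b_{kj}}$, regrouping the column-selection maps $\phi$ by their multiplicity vector $K_\phi$, gives exactly the same sum; your bookkeeping of the factor $1/K!$ (each $\phi$ with $K_\phi=K$ lifts to $K!$ bijections onto the columns of $A(K)$) and of the conjugations (the inner $\sigma$-sum is $\per(\overline{B}(K))=\overline{\per(B(K))}$, using only column-permutation invariance) is right. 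In fact, since $\sum_{\phi:K_\phi=K}\prod_i a_{i\phi(i)}$ is literally ${\rm coe}(x^K,f_A)$, you could shorten the $A$-side regrouping by citing (\ref{eqn1}) once more instead of recounting bijections. Your alternative reduction by multilinearity in the rows of $A$ and conjugate-multilinearity in the rows of $B$ to the case of standard basis rows, where both sides equal $K!$ or $0$, is also valid and arguably the cleanest way to keep the constants transparent.
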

  
  So to prove $K \in \mathbb{N}^E$ is sufficient for $G$, it suffices to show that $$\langle f_{A_G}, f_{B_G[K]} \rangle\ne0.$$ 
  On the other hand, if $E$ is the edge set of $G$, then it follows from the definitions that 
\begin{center}
$f_{A_G}=\prod\limits_{e=\{i, j\}\in E, i<j}(x_i-x_j)$, and $f_{B_G[K]}=\prod\limits_{e=\{i, j\}\in E, i<j}(x_i+x_j)^{K(e)}$.
\end{center}
 
\begin{definition} \label{linearspace} 
For $K\in\mathbb{N}^E$, let $W_E^K$ be the complex vector space spanned by $$\{\prod\limits_{e=\{i, j\}\in E, i<j}(x_i+x_j)^{K'(e)}: K' \le K\}.$$
\end{definition}

Thus we have the following lemma, which was proved in \cite{Cao}.

\begin{lemma} \label{equivalent}
	Assume $G$ is a  graph with edge set $E$ and $K\in\mathbb{N}^E$. Then $K$ is sufficient for $G$ if and only if $\left< F, f_{A_G}\right>\ne0$ for some $F \in W_E^K$.
\end{lemma}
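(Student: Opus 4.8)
The final statement to prove is Lemma~\ref{equivalent}: for a graph $G$ with edge set $E$ and $K\in\mathbb{N}^E$, the multiset $K$ is sufficient for $G$ if and only if $\langle F, f_{A_G}\rangle\ne 0$ for some $F\in W_E^K$.

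\medbreak\noindent\textbf{Proof proposal.}
The plan is to unwind the definition of ``sufficient'' and translate it, via the inner product machinery already set up, into a statement about the subspace $W_E^K$. Recall that $K$ is sufficient for $G$ means there is $K'\le K$ with $x^{K'}\in\mathrm{mon}(P_G)$, i.e.\ $\mathrm{coe}(x^{K'},P_G)\ne 0$. Since $P_G=F_{C_G}$ is homogeneous of degree $|E|$, only exponent vectors $K'\in\mathbb{N}^E_{|E|}$ can occur, so we may assume $K'\in\mathbb{N}^E_{|E|}$. By (\ref{eqn1b}) and the preceding Lemma (the one stating $\per(AB^*)=\langle f_A,f_B\rangle$), we have
\[
K'!\,\mathrm{coe}(x^{K'},P_G)=\per(C_G(K'))=\per\bigl(A_G\,B_G[K']^{*}\bigr)=\langle f_{A_G}, f_{B_G[K']}\rangle,
\]
using that $C_G(K')=A_G B_G[K']^{T}$ and that $B_G$ has real entries so transpose equals conjugate transpose. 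Hence $K'$ being a witness to sufficiency is equivalent to $\langle f_{A_G}, f_{B_G[K']}\rangle\ne 0$, and by the identification recorded in the excerpt, $f_{B_G[K']}=\prod_{e=\{i,j\}\in E,\,i<j}(x_i+x_j)^{K'(e)}$.

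First I would prove the easy direction ($\Rightarrow$): if some $K'\le K$ witnesses sufficiency, then $f_{B_G[K']}$ is by Definition~\ref{linearspace} one of the spanning vectors of $W_E^K$, so $F:=f_{B_G[K']}\in W_E^K$ satisfies $\langle F, f_{A_G}\rangle=\overline{\langle f_{A_G},F\rangle}\ne 0$. (One should note the inner product here is conjugate-symmetric, so nonvanishing of $\langle f_{A_G},F\rangle$ and of $\langle F, f_{A_G}\rangle$ are equivalent.)

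For the converse ($\Leftarrow$), suppose $\langle F, f_{A_G}\rangle\ne 0$ for some $F\in W_E^K$. Write $F=\sum_{K'\le K}\lambda_{K'}\,f_{B_G[K']}$ as a (finite) linear combination of the spanning set of $W_E^K$. Then $0\ne\langle F, f_{A_G}\rangle=\sum_{K'\le K}\lambda_{K'}\langle f_{B_G[K']}, f_{A_G}\rangle$, so at least one term is nonzero, giving some $K'\le K$ with $\langle f_{B_G[K']}, f_{A_G}\rangle\ne 0$; by the displayed identity this means $\mathrm{coe}(x^{K'},P_G)\ne 0$, i.e.\ $x^{K'}\in\mathrm{mon}(P_G)$ with $K'\le K$, which is exactly sufficiency of $K$. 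This step uses only linearity of the inner product in its first argument and the fact that the $f_{B_G[K']}$ with $K'\le K$ span $W_E^K$ by definition.

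I do not expect a serious obstacle here; the lemma is essentially a bookkeeping translation. The only points requiring care are: (i) making sure that only homogeneous degree-$|E|$ monomials are relevant, so that (\ref{eqn1b}) applies with $K'\in\mathbb{N}^E_{|E|}$ and the inner product on the degree-$|E|$ homogeneous component is the right pairing; (ii) the real-versus-complex conjugation bookkeeping in passing between $B_G[K']^{T}$ and $B_G[K']^{*}$ and between $\langle f_{A_G},F\rangle$ and $\langle F,f_{A_G}\rangle$; and (iii) observing that a vanishing of $\mathrm{coe}(x^{K'},P_G)$ is unaffected by the nonzero scalar $1/K'!$. Since this lemma is attributed to \cite{Cao}, I would keep the proof to a short paragraph assembling these pieces rather than re-deriving the permanent identities.
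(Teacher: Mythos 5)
Your proof is correct, and it assembles exactly the ingredients the paper intends: the coefficient--permanent identity (\ref{eqn1b}), the identity $\per(AB^*)=\langle f_A,f_B\rangle$ of Lemma 3.1, the identification $f_{B_G[K']}=\prod_{\{i,j\}\in E}(x_i+x_j)^{K'(e)}$, and linearity/conjugate-symmetry of the inner product, with the degree-$|E|$ homogeneity point properly noted. The paper itself gives no proof here (it cites \cite{Cao}), but its surrounding discussion is precisely this bookkeeping, so your argument matches the intended one.
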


The following lemma is an easy observation, but it is the key tool for proving the main results of this paper.

\begin{lemma}
\label{lem-key}
If $f_{A_G} \in W_E^K$ for some $K \in \mathbb{N}_{(b^-)}^E$, then $G$ is algebraic $(1, b+1)$-choosable.
\end{lemma}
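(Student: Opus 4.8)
The statement to prove is Lemma~\ref{lem-key}: if $f_{A_G} \in W_E^K$ for some $K \in \mathbb{N}_{(b^-)}^E$, then $G$ is algebraic $(1,b+1)$-choosable. The plan is to simply chase the definitions, using the inner-product machinery already set up. First I would recall that by definition $G$ is algebraic $(1,b+1)$-choosable precisely when there is some $K'' \in \mathbb{N}^{E}_{|E|}$ with $K''(e) \le b$ for every edge $e$ such that $x^{K''} \in {\rm mon}(P_G)$, i.e. ${\rm coe}(x^{K''}, P_G) \ne 0$; equivalently (dropping the vertex variables, which is legitimate since we are targeting $(1,\cdot)$-choosability) it suffices to exhibit a $K \in \mathbb{N}_{(b^-)}^E$ that is sufficient for $G$ in the sense defined before Lemma~\ref{equivalent}.

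So the core of the argument is: show that the hypothesis $f_{A_G} \in W_E^K$ implies $K$ is sufficient for $G$. This is immediate from Lemma~\ref{equivalent}: that lemma says $K$ is sufficient for $G$ if and only if $\langle F, f_{A_G}\rangle \ne 0$ for some $F \in W_E^K$. Now take $F = f_{A_G}$ itself, which lies in $W_E^K$ by hypothesis. Then $\langle F, f_{A_G}\rangle = \langle f_{A_G}, f_{A_G}\rangle = \|f_{A_G}\|^2$, and this is nonzero because $f_{A_G} = \prod_{e=\{i,j\}\in E,\, i<j}(x_i - x_j)$ is a nonzero polynomial (it is a product of nonzero linear forms over $\mathbb{C}$, hence nonzero as a polynomial), so it has at least one monomial with nonzero coefficient, forcing the sum defining $\langle f_{A_G}, f_{A_G}\rangle = \sum_K K!\,|{\rm coe}(x^K, f_{A_G})|^2$ to be strictly positive. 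Hence $K$ is sufficient for $G$.

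Finally I would translate "sufficient for $G$" back into the conclusion: sufficiency of $K$ means there is $K' \le K$ with $x^{K'} \in {\rm mon}(P_G)$. Since $K \in \mathbb{N}_{(b^-)}^E$ we have $K'(e) \le K(e) \le b < b+1$ for every edge, and $K' \in \mathbb{N}^E_{|E|}$ because $P_G$ is homogeneous of degree $|E|$; padding with zero exponents on the vertex variables $x_i$ then gives a monomial $x^{K'}$ in ${\rm mon}(\tilde P_G)$ with exponent below $1$ on each vertex and below $b+1$ on each edge, which is exactly the definition of algebraic $(1,b+1)$-choosability. There is essentially no obstacle here: the only thing to be careful about is the bookkeeping between $\tilde P_G$ and $P_G$ (i.e. that setting the vertex exponents to $0$ is harmless, which is precisely why $P_G$ was introduced), and the observation that $f_{A_G}$ is a nonzero polynomial so that $\langle f_{A_G}, f_{A_G}\rangle \ne 0$. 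All the real work is hidden in Lemma~\ref{equivalent} and the identity $\per(AB^*) = \langle f_A, f_B\rangle$, which we are entitled to assume.
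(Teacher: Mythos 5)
Your proposal is correct and follows the paper's own argument exactly: take $F = f_{A_G} \in W_E^K$ in Lemma~\ref{equivalent}, use $\langle f_{A_G}, f_{A_G}\rangle > 0$ since $f_{A_G}\ne 0$, conclude $K$ is sufficient, and then read off algebraic $(1,b+1)$-choosability from $K \in \mathbb{N}^E_{(b^-)}$. Your extra remarks about the bookkeeping between $P_G$ and $\tilde P_G$ just make explicit what the paper leaves implicit.
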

\begin{proof}
Assume $f_{A_G} \in W_E^K$. As $f_{A_G} \ne 0$, we have $\langle f_{A_G}, f_{A_G} \rangle > 0$. By Lemma \ref{equivalent}, $K$ is sufficient for $G$. As $K \in \mathbb{N}^E_{b^-}$, i.e., $K(e) 
\le b$ for all edges $e$, we conclude that $G$ is algebraic $(1, b+1)$-choosable. \qed
\end{proof}

As an example, consider a triangle $T$ with vertex set $\{i,j,k\}$. 
By definition, $f_{A_T} = (x_i-x_j)(x_j-x_k)(x_i-x_k)$. To prove that $f_{A_T} \in W_E^K$, we need to express $f_{A_T}$ as a polynomial in $(x_i+x_j), (x_j+x_k), (x_i+x_k)$ in such a way that for each edge $e$, say for $e=\{x_i, x_j\}$, the term $(x_i+x_j)$ occurs in the expression at most $K(e)$ times. 
We can write $f_{A_T}$ as 
$$f_{A_T}= ((x_i+x_k)-(x_j+x_k))((x_i+x_j)-(x_i+x_k))((x_i+x_j)-(x_j+x_k)).$$
It is easy to check that for each edge, say for $e=
\{x_i,x_j\}$, the term $(x_i+x_j)$ occurs twices in the expression above. Thus $f_{A_T} \in W_E^K$, and $K(e)=2$ for each edge $e$ of $T$. 

To express $f_{A_G}$ as a polynomial in $\{x_i+x_j: \{i,j\} \in E\}$, it suffices to express, for each edge $\{i,j\} \in E$, the term $(x_i-x_j)$ as a linear combination of terms $\{ (x_{i'}+x_{j'}): \{i',j'\} \in E\}$. This is done by choosing an even length path connecting vertices  $i$ and $j$ (see the proof of Lemma \ref{lem-cover} below). 

\begin{definition}
\label{def-pathcover}
Assume $G=(V,E)$ is a graph. A {\em path covering family} of $G$ is a family $\mathcal{P}$ of paths, that consists of, for each edge $e=\{i,j\}$,  an even length path $P_e$ connecting $i$ and $j$. 
\end{definition}

For a subgraph $H$ of $G$, $K_H: E \to \mathbb{N}$ is the characteristic function of $E(H)$, i.e., $K_H(e) = 1$ if $e \in E(H)$ and $K_H(e) = 0$ otherwise. For a multi-family $\mathcal{F}$ of subgraphs of $G$, 
$$K_{\mathcal{F}} = \sum_{H \in \mathcal{F}}K_H.$$

\begin{lemma}
\label{lem-cover}
If $G$ has a path covering family $\mathcal{P}$ with $K_{\mathcal{P}}(e) \le b$ for each edge $e$, then $G$ is algebraic $(1, b+1)$-choosable.
\end{lemma}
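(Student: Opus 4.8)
The plan is to combine Lemma~\ref{lem-key} with an explicit expansion coming from the path covering family $\mathcal{P}$. By Lemma~\ref{lem-key}, it suffices to show that $f_{A_G} \in W_E^{K}$ for some $K \in \mathbb{N}_{(b^-)}^E$, and the natural candidate is $K = K_{\mathcal{P}}$, which by hypothesis satisfies $K_{\mathcal{P}}(e) \le b$ for every edge $e$.

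First I would record the telescoping identity that the excerpt alludes to just before Definition~\ref{def-pathcover}. Fix an edge $e = \{i,j\}$ and let $P_e = i = v_0, v_1, \ldots, v_{2\ell} = j$ be the even-length path assigned to $e$ by $\mathcal{P}$. Then
$$x_i - x_j = \sum_{t=0}^{2\ell - 1} (-1)^t \left( x_{v_t} + x_{v_{t+1}} \right),$$
since the right-hand side telescopes: consecutive terms cancel in pairs because of the alternating sign and the even length, leaving exactly $x_{v_0} - x_{v_{2\ell}} = x_i - x_j$. Thus $(x_i - x_j)$ is a linear combination of the terms $\{x_{v_t} + x_{v_{t+1}} : 0 \le t < 2\ell\}$, each of which is $x_{e'} \!:=\! (x_{i'} + x_{j'})$ for the edge $e' = \{v_t, v_{t+1}\}$ lying on $P_e$.

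Next I would substitute these expansions into $f_{A_G} = \prod_{e = \{i,j\} \in E,\, i<j} (x_i - x_j)$. Replacing each factor $(x_i - x_j)$ by the corresponding linear combination over the edges of $P_e$ and fully expanding the product, we obtain $f_{A_G}$ as a $\mathbb{C}$-linear combination of monomials of the form $\prod_{e \in E} (x_{i_e} + x_{j_e})^{m(e)}$, where for each such monomial the exponent $m(e)$ counts how many of the chosen factors landed on edge $e$; since edge $e$ can only be selected from a factor $(x_{i'} - x_{j'})$ whose path $P_{\{i',j'\}}$ passes through $e$, we have $m(e) \le |\{\{i',j'\} \in E : e \in E(P_{\{i',j'\}})\}| = K_{\mathcal{P}}(e)$. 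Hence every monomial appearing in the expansion lies in $W_E^{K_{\mathcal{P}}}$ (each exponent function is $\le K_{\mathcal{P}}$), so $f_{A_G} \in W_E^{K_{\mathcal{P}}}$. Since $K_{\mathcal{P}} \in \mathbb{N}_{(b^-)}^E$ by assumption, Lemma~\ref{lem-key} immediately gives that $G$ is algebraic $(1, b+1)$-choosable, and the definition of algebraic choosability together with Combinatorial Nullstellensatz yields that $G$ is $(1,b+1)$-choosable.

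The only genuinely delicate point is the bookkeeping in the last step: making sure that, when the big product is expanded, the exponent of $(x_{i}+x_{j})$ in any resulting monomial really is bounded by the number of paths $P_{e'}$ through the edge $\{i,j\}$, i.e.\ by $K_{\mathcal{P}}(\{i,j\})$. This is where the definition of $K_{\mathcal{P}} = \sum_{H \in \mathcal{P}} K_H$ is used: one factor of the product contributes to the exponent of $x_e$ at most once, and it does so only if $e$ lies on that factor's path. Everything else — the telescoping identity and the appeal to Lemma~\ref{lem-key} — is routine.
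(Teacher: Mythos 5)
Your proof is correct and follows essentially the same route as the paper: the telescoping identity $x_i-x_j=\sum_{t=0}^{2\ell-1}(-1)^t(x_{v_t}+x_{v_{t+1}})$ places each factor in $W_E^{K_{P_e}}$, expanding the product gives $f_{A_G}\in W_E^{K_{\mathcal{P}}}$, and Lemma~\ref{lem-key} finishes the argument. The only difference is that you spell out the bookkeeping of exponents in the expansion, which the paper leaves implicit.
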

\begin{proof}
Assume $\mathcal{P}$ is a path covering family with $K_{\mathcal{P}}(e) \le b$ for each edge $e$. Assume $e=\{i,j\}$ is an edge of $G$, and $P_e = (i_0,i_1, \ldots, i_{2k})$ is an even length path connecting $i$ and $j$, i.e., $i_0=i$ and $i_{2k} = j$. Then 
$$x_i-x_j = \sum_{l=0}^{2k-1} (-1)^l (x_{i_l}+x_{i_{l+1}}) \in W_E^{K_{P_e}}.$$
Hence $$f_{A_G} = \prod_{ \{i,j\} \in E} (x_i-x_j)  \in W_E^{K_{\mathcal{P}}}.$$
As $K_{\mathcal{P}}(e) \le b$ for each edge $e$, we have $f_{A_G} \in W_E^K$ and $K \in \mathbb{N}^E_{b^-}$. By Lemma \ref{lem-key}, $G$ is algebraic $(1, b+1)$-choosable. \qed
\end{proof}

The following lemma follows easily from the definitions and its proof is omitted.

\begin{lemma}
\label{lem-decomp}
If $G$ decomposes into graphs $H_1,H_2, \ldots, H_q$, and each $H_i$ has a path covering family $ \mathcal{P}_i$ with $K_{\mathcal{P}_i} \in W_{E(H_i)}^{K_i}$ and $K_i \in \mathbb{N}^{E(H_i)}_{(b^-)}$, then $\mathcal{P} = \cup_{i=1}^q \mathcal{P}_i$ is a path covering family of $G$, $K_{\mathcal{P}} \in W_E^K$ and $K = \sum_{i=1}^q K_i \in \mathbb{N}^E_{(b^-)}$.  \qed
\end{lemma}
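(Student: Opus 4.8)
The plan is to reduce everything to the definitions of $K_H$, $K_{\mathcal{F}}$, and the spaces $W_E^K$, together with the observation that a path in $H_i$ is also a path in $G$. First I would note that since $G$ decomposes into $H_1,\dots,H_q$, the edge set $E$ is the disjoint union $\bigcup_{i=1}^q E(H_i)$; in particular every edge $e$ of $G$ lies in exactly one $H_i$, so for any subgraph $H$ of some $H_i$ its characteristic function $K_H$ (a priori defined on $E(H_i)$) extends by zero to a function on $E$, and this extension is compatible with the sum defining $K_{\mathcal{F}}$. Next, each $P_e \in \mathcal{P}_i$ is a path of $H_i$, hence a path of $G$, and it connects the two endpoints of $e$; moreover $\mathcal{P}_i$ contains exactly one such path for each $e \in E(H_i)$, and the $E(H_i)$ partition $E$, so $\mathcal{P} = \bigcup_{i=1}^q \mathcal{P}_i$ contains exactly one even-length path $P_e$ connecting the endpoints of $e$ for each $e \in E$. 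This is precisely Definition \ref{def-pathcover}, so $\mathcal{P}$ is a path covering family of $G$.

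For the second claim, I would compute $K_{\mathcal{P}}$ directly: because the $\mathcal{P}_i$ are disjoint multi-families and the supports $E(H_i)$ are disjoint, $K_{\mathcal{P}} = \sum_{H \in \mathcal{P}} K_H = \sum_{i=1}^q \sum_{H \in \mathcal{P}_i} K_H = \sum_{i=1}^q K_{\mathcal{P}_i}$. Each $K_{\mathcal{P}_i}$, viewed on $E$, is supported on $E(H_i)$ and satisfies $K_{\mathcal{P}_i}(e) \le b$ for $e \in E(H_i)$ by hypothesis (this is the content of $K_i \in \mathbb{N}^{E(H_i)}_{(b^-)}$, since one expects the intended hypothesis to be $K_{\mathcal{P}_i} \le K_i$, i.e. the path covering family of $H_i$ uses each edge at most $b$ times); since the supports are disjoint, $K = \sum_{i=1}^q K_i$ also satisfies $K(e) \le b$ for every $e \in E$, so $K \in \mathbb{N}^E_{(b^-)}$. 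Finally, for the membership $K_{\mathcal{P}} \in W_E^K$: here one should read this as $f_{A_G} \in W_E^{K_{\mathcal{P}}}$ as in the proof of Lemma \ref{lem-cover}. Expanding $f_{A_G} = \prod_{\{i,j\} \in E}(x_i - x_j)$ and grouping the factors according to which $H_i$ each edge belongs to, one gets $f_{A_G} = \prod_{i=1}^q f_{A_{H_i}}$; replacing each $(x_i - x_j)$ by the telescoping sum along its path $P_e$ as in Lemma \ref{lem-cover} writes $f_{A_{H_i}}$ as an element of $W_{E(H_i)}^{K_{\mathcal{P}_i}} \subseteq W_{E(H_i)}^{K_i}$, and since $W_{E(H_i)}^{K_i} \subseteq W_E^{K}$ (a polynomial in the variables $x_i + x_j$ for $\{i,j\} \in E(H_i)$ using each such term at most $K_i(e) \le K(e)$ times is a fortiori in $W_E^K$), the product lies in $W_E^K$.

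The only genuinely delicate point is bookkeeping with the support conventions — keeping straight that a function on $E(H_i)$ is silently identified with its zero-extension to $E$, and that the sums over disjoint supports do not interfere. Once that is fixed, the statement is a direct unwinding of Definitions \ref{def-pathcover} and \ref{linearspace} together with the telescoping identity from the proof of Lemma \ref{lem-cover}, so I would expect the written proof to be two or three short sentences, which is consistent with the authors' remark that the proof is omitted.
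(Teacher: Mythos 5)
Your proposal is correct and is exactly the straightforward unwinding of Definitions \ref{def-pathcover} and \ref{linearspace} that the authors have in mind when they omit the proof, including the sensible reading of the hypothesis as $f_{A_{H_i}} \in W_{E(H_i)}^{K_i}$ (equivalently $K_{\mathcal{P}_i}\le K_i$) and of the conclusion as $f_{A_G}\in W_E^K$. The only wording to tighten is the last step: rather than arguing that each factor lies in $W_E^K$ (which alone would not bound the product), invoke the multiplicativity $W_E^{K_1}\cdot W_E^{K_2}\subseteq W_E^{K_1+K_2}$, which together with the disjointness of the supports $E(H_i)$ immediately gives $f_{A_G}=\prod_{i=1}^q f_{A_{H_i}}\in W_E^{\sum_i K_i}=W_E^K$.
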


\bigskip
\noindent
{\bf Proof of  
Theorem \ref{thm-main}}: By Lemmas \ref{lem-cover} and \ref{lem-decomp}, it suffices to show that each complete graph $K_n$ of odd order has a path covering family $\mathcal{P}$ with $K_{\mathcal{P}} \in \mathbb{N}^E_{(2^-)}$. Assume $K_n$ has vertex set $\{1,2,\ldots, n\}$. For each edge $e=\{i,j\} \in E(K_n)$, where $i < j$,  
let 
 \[
 t_{i,j}  =
\begin{cases} 
i+\frac{j-i}{2},  & \mbox{if }j-i\mbox{ is even} \\
j+\frac{n-(j-i)}{2} \pmod n, & \mbox{if }j-i\mbox{ is odd},
\end{cases}
\]
and let $P_e = (i, t_{i,j}, j)$.  Then
$\mathcal{P} = \{P_e: e \in E(K_n)\}$ is a path covering family of $K_n$. For 
each edge $\{i,j\}$ of $K_n$, let $e_{i,j} = \{i, 2j-i\}$ and $e'_{i,j} = \{j, 2i-j\}$ (where calculations are modulo $n$), it is easy to verify that $\{i,j\}$ is contained in $P_{e_{i,j}}$ and $P_{e'_{i,j}}$. So each edge of $K_n$ is contained in two paths in $\mathcal{P}$, i.e., 
$K_{\mathcal{P}}(e) =2$ for each edge $e$ of $K_n$.   
  This completes the proof of Theorem \ref{thm-main}. \qed   
 
 For a graph $G$, let ${\rm gcd}(G)$  be the largest integer
dividing the degree of every vertex of $G$.  
We say that $G$ is $F$-divisible 
if $|E(G)|$ is divisible by
$|E(F)|$ and ${\rm gcd}(G)$ is divisible by ${\rm gcd}(F)$.

\bigskip
 \noindent
 {\bf Proof of Theorem \ref{dense}}
 
 The following result  was proved in \cite{Barber}:

\begin{theorem} 
\label{thm-triangle}
For every $\epsilon > 0$, there is an integer $n_0$ such that if $G$ is a triangle-divisible graph 
of  order $n \ge n_0$ and minimum degree at least $(0.9 + \epsilon)n$, then $G$ has a  triangle decomposition.
\end{theorem}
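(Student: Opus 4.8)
The plan is to prove the theorem by the two-step strategy that underlies essentially all dense edge-decomposition results: first pass to a \emph{fractional} relaxation, whose minimum-degree threshold is what produces the constant $0.9$, and then upgrade a fractional triangle decomposition to an exact one by an \emph{iterative absorption} argument, which is what costs the extra $\epsilon n$ in the degree condition. Before anything else I would note that the divisibility hypotheses are forced: in any triangle decomposition each edge lies in exactly one triangle, so a vertex of degree $d$ lies in $d/2$ triangles and $d$ must be even, i.e. $2 \mid {\rm gcd}(G)$; and the total number of triangles is $|E(G)|/3$, so $3 \mid |E(G)|$. These are precisely the two conditions in the definition of triangle-divisibility, so the theorem is the assertion that for dense $G$ they are also sufficient.

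The fractional relaxation replaces a partition of $E(G)$ into triangles by a system of nonnegative weights on the triangles of $G$ for which the weights of the triangles through each fixed edge sum to $1$; such a system is a \emph{fractional triangle decomposition}. The input I would rely on is that minimum degree just above $0.9n$ already forces one to exist: for $n$ large, every $n$-vertex graph with $\delta(G) \ge (0.9 + \epsilon/2)n$ has a fractional triangle decomposition. This is the sole origin of the number $0.9$; the integer threshold we are after cannot be smaller than the fractional one, and $0.9$ is the best known upper bound for the latter. The remaining work is entirely about integralising this weighting, and this is where the absorption scheme enters.

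The integral upgrade proceeds in three interlocking layers. (i) At the outset I would set aside a sparse \emph{absorbing structure}, an edge-disjoint family of small gadgets chosen so that for every sufficiently small triangle-divisible graph $H$ supported on a fixed bounded vertex set, the edges of $H$ together with the reserved gadgets partition into triangles; the point is that the absorber is fixed in advance but can swallow an a priori unknown admissible remainder. (ii) I would fix a \emph{vortex}, a nested chain $V = V_0 \supseteq V_1 \supseteq \cdots \supseteq V_\ell$ with $|V_{i+1}| \approx \mu|V_i|$ and $|V_\ell|$ bounded. (iii) I would then iterate a \emph{cover-down} step: assuming the currently uncovered edges all lie inside $V_i$, apply the fractional decomposition of the relevant dense subgraph together with a R\"{o}dl-nibble (random greedy) packing to remove a family of edge-disjoint triangles covering almost every such edge, arranged so that the new uncovered graph is again triangle-divisible and now lives inside $V_{i+1}$. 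After $\ell$ rounds the leftover sits inside the tiny set $V_\ell$, whereupon the reserved absorber from (i) completes the triangle partition.

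The main obstacle is the cover-down step, specifically the need to preserve the divisibility invariants through every round. The nibble produces only an almost-perfect packing, so the small uncovered remainder must be forced entirely into $V_{i+1}$ while keeping all degrees even and the edge count divisible by $3$; achieving this typically requires a local parity-correction mechanism, e.g. reserving a bounded number of triangles at each vertex beforehand and exchanging them in to fix the residual parities. A second delicate point is the construction in (i): the absorber must cope with \emph{any} admissible leftover on $V_\ell$, even though that leftover is revealed only at the end, which is usually handled by building many candidate gadgets and selecting a mutually compatible, edge-disjoint subfamily via a probabilistic alteration argument. Finally one must check that $\delta(G) \ge (0.9+\epsilon)n$ survives, up to negligible degree loss, both the initial reservation of the absorber and each nibble round, so that the fractional decomposition remains available at every scale; this bookkeeping is routine but essential. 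Assembling the absorber, the vortex, and the iterated cover-down then yields the desired triangle decomposition.
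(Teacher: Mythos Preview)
Your proposal is a reasonable high-level sketch of the iterative absorption strategy, but there is nothing in the paper to compare it against: the paper does not prove this theorem at all. It is quoted verbatim as a black box from \cite{Barber} (Barber, K\"uhn, Lo and Osthus), introduced with the sentence ``The following result was proved in \cite{Barber},'' and then applied directly in the proof of Theorem~\ref{dense}. So the ``paper's own proof'' is simply a citation.

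For what it is worth, your outline does match the architecture of the argument in \cite{Barber}: the constant $0.9$ enters via the fractional triangle-decomposition threshold (due to Dross), and the upgrade from fractional to exact is carried out by a vortex plus iterative cover-down with an absorber reserved at the start. Your description of the obstacles (divisibility maintenance through each round, robustness of the absorber, degree bookkeeping) is accurate at the level of a proof plan. But none of this appears in the present paper, which treats the theorem purely as an imported tool.
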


Assume $G$ is an $n$-vertex Eulerian graph of minimum degree $\delta(G) > (0.9 + \epsilon)n$. By Theorem \ref{thm-main}, it suffices to show that $G$ decomposes into complete graphs of odd order.

Assume $|E(G)| \equiv i \pmod{3}$, where $i \in \{0,1,2\}$. Let $H_1, \ldots, H_i$ be vertex disjoint $5$-cliques in $G$. Then $G'=G- \cup_{j=1}^iE(H_j)$ is triangle divisible and $\delta(G') \ge \delta(G)-4 \ge(0.9+\epsilon') n$. By Theorem \ref{thm-triangle}, $G'$ is triangle decomposible. Hence $G$ decomposes into complete graphs of odd order.   This completes the proof of Theorem \ref{dense}.

\bigskip

\noindent
{\bf Proof of Theorem \ref{thm-dense2}}:

\begin{lemma}
\label{lem-3}
 Let $H=(V,E)$ be the graph shown in Figure \ref{fig1}. Then $H$ has a path covering family $\mathcal{P}$ with $K_{\mathcal{P}} \in \mathbb{N}^E_{(3^-)}$.  
\end{lemma}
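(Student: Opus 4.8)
\medskip
\noindent\emph{Plan of proof.}\quad
Because $H$ is a single fixed finite graph, the proof will be an explicit construction followed by a finite check. The plan is: (i) read off from Figure~\ref{fig1} a concrete even-length path $P_e\subseteq H$ joining the endpoints of $e$, for every edge $e\in E$, and set $\mathcal P=\{P_e:e\in E\}$; (ii) for each edge $e\in E$ compute $K_{\mathcal P}(e)=\sum_{e'\in E}K_{P_{e'}}(e)$, the number of chosen paths that traverse $e$; and (iii) verify $K_{\mathcal P}(e)\le 3$ for every $e$, i.e. $K_{\mathcal P}\in\mathbb N^E_{(3^-)}$. By definition $\mathcal P$ is then a path covering family of $H$ with the required multiplicity bound; Lemmas~\ref{lem-cover} and~\ref{lem-decomp} are what make this useful afterwards, since $H$ will be combined with triangles to build a path covering family of a dense graph $G$ and hence prove $(1,4)$-choosability.

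For step~(i) the natural choice is to take $P_e$ of length $2$ through a common neighbour of the two endpoints of $e$ whenever one exists, and to use a length-$4$ path only when it does not. The only freedom is which common neighbour to route each edge through, and the entire content of the lemma is that these choices can be balanced. It is convenient to recast the bookkeeping vertex by vertex: routing an edge $e$ through an intermediate vertex $v$ uses exactly the two edges of $H$ joining $v$ to the endpoints of $e$, so $K_{\mathcal P}(vw)$ equals the number of edges incident to $w$ that are routed through $v$, plus the number of edges incident to $v$ that are routed through $w$. One then picks the routing function (edge $\mapsto$ intermediate vertex) so that every such sum is at most $3$; when $H$ has a symmetry group, choosing the routing to be equivariant reduces the verification in step~(iii) to one orbit of edges.

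The step I expect to be the real obstacle is this balancing, and the reason it succeeds is a structural feature of the specific graph in Figure~\ref{fig1}. If $H$ had a vertex of small degree or a bridge, the few even paths available in that part of $H$ would be forced onto the nearby edges and some edge could end up with multiplicity $\ge 4$ (this already happens for $K_4$ minus an edge, and for books of triangles sharing a common edge), so Figure~\ref{fig1} must be a graph in which every edge lies on enough, and sufficiently spread-out, even paths. Conversely, when $H$ is dense enough that every pair of vertices has two common neighbours --- the model case being $K_4$ --- one checks that no length-$2$ family can do better than multiplicity $3$: a short averaging argument gives $\sum_e K_{\mathcal P}(e)=2|E|$ when all paths have length $2$, and a direct case analysis rules out the all-$2$ profile, so $3$ is unavoidable and best possible, while an explicit rotationally-symmetric routing attains exactly $K_{\mathcal P}\in\{1,2,3\}^E$. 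Assembling the chosen paths and tabulating $K_{\mathcal P}$ then finishes the proof.
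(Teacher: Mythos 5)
There is a genuine gap: what you have written is a strategy outline, not a proof of the lemma. The lemma is a statement about one specific graph $H$ (the one in Figure~\ref{fig1}), and its entire content is an explicit choice of even paths plus a finite verification of the multiplicity bound; your proposal never exhibits those paths for the actual $H$, and the phrase ``Figure~\ref{fig1} must be a graph in which every edge lies on enough, and sufficiently spread-out, even paths'' assumes exactly what has to be checked. Moreover, the structure you guess for $H$ is wrong: the only graph you actually verify is $K_4$ (``the model case''), with the heuristic that a low-degree vertex or endpoints with a unique common neighbour would force multiplicity $\ge 4$. The graph in the figure is the bowtie consisting of two triangles $T_1=(1,2,4)$ and $T_2=(2,3,5)$ sharing the vertex $2$, together with the extra edge $\{1,3\}$; it has two vertices of degree $2$, and several edges (e.g.\ $\{1,4\}$ and $\{1,3\}$) whose endpoints have only one common neighbour, yet the bound $3$ still holds. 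So your case analysis does not transfer, and no construction for the actual $H$ is supplied.

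For comparison, the paper's proof is a one-line instance of your general ``route through a common neighbour'' idea, organized via Lemma~\ref{lem-decomp}: take the triangle path covering family inside each of $T_1$ and $T_2$ (each edge of a triangle is covered exactly twice, by the two length-$2$ paths through the opposite vertex), and route the remaining edge $\{1,3\}$ along $P_{\{1,3\}}=(1,2,3)$. This adds $1$ to the edges $\{1,2\}$ and $\{2,3\}$, giving $K_{\mathcal P}(\{1,2\})=K_{\mathcal P}(\{2,3\})=3$, $K_{\mathcal P}=2$ on the other four triangle edges, and $K_{\mathcal P}(\{1,3\})=0$, so $K_{\mathcal P}\in\mathbb{N}^E_{(3^-)}$. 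Your framework would have produced this once the figure is known, but as it stands the decisive step --- the explicit family and the tabulation of $K_{\mathcal P}$ for the graph the lemma is actually about --- is missing.
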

\begin{proof}
We denote by $T_1=(1,2,4), T_2=(2,3,5)$ the two edge disjoint triangles in $H$. 
For each triangle $T_i$, let $\mathcal{P}_i$ be the path covering family with $K_{\mathcal{P}_i} \in \mathbb{N}^{E(T_i)}_{(2^-)}$. For the edge $e=\{1,3\}$ which is not contained in the 2 triangles, let $P_e = (1,2,3)$. Then 
$$\mathcal{P} = \cup_{i=1}^4 \mathcal{P}_i \cup \{P_e\}$$
is a path covering family of $H$ with $K_{\mathcal{P}} \in \mathbb{N}^E_{(3^-)}$. 
This completes the proof of Lemma \ref{lem-3}.
\qed
\end{proof}

To prove Theorem \ref{thm-dense2}, we need the following   theorem proved in  \cite{Barber}:

\begin{theorem} 
\label{thm-H}
For every $\epsilon > 0$, there is an integer $n_0$ such that if $G$ is an $H$-divisible graph 
of  order $n \ge n_0$ and minimum degree at least $(1-1/t+ \epsilon)n$, where $t \max \{16 \chi(H)^2(\chi(H)-1)^2, |E(H)|\}$, then $G$ has an  $H$-decomposition.
\end{theorem}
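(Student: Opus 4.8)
The plan is to prove Theorem \ref{thm-H} by the now-standard two-stage strategy for edge-decomposition results: first reduce the existence of an \emph{integral} $H$-decomposition to the existence of a \emph{fractional} one, and then establish the fractional decomposition under the stated minimum-degree hypothesis. Here a fractional $H$-decomposition of $G$ is a weighting $w$ of the copies of $H$ in $G$ by non-negative reals such that for every edge $f \in E(G)$, $\sum_{H' \ni f} w(H') = 1$. The role of the two divisibility conditions — that $|E(H)|$ divides $|E(G)|$ and that $\gcd(H)$ divides $\gcd(G)$ — is exactly to remove the obvious integral obstructions, so that once a fractional decomposition exists the integral one can be recovered. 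The threshold $1 - 1/t + \epsilon$ with $t = \max\{16\chi(H)^2(\chi(H)-1)^2, |E(H)|\}$ reflects two separate requirements: the first term governs the fractional decomposition threshold, while the second merely guarantees that $n$ is large enough to host the absorbing gadgets built from copies of $H$.

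For the fractional stage, I would show that minimum degree at least $(1 - 1/(16\chi(H)^2(\chi(H)-1)^2) + \epsilon')n$ forces a fractional $H$-decomposition. The idea is to pass from $H$ to its chromatic structure: embed $H$ into a complete multipartite pattern on $\chi(H)$ parts and reduce a fractional $H$-decomposition to a fractional clique (or balanced blow-up) decomposition, whose threshold is controlled by $\chi(H)$. Concretely one uses local weight-shifting — distributing weight over copies of $H$ sharing an edge and correcting the imbalance edge by edge — together with known fractional clique-decomposition bounds; the factor $\chi(H)^2(\chi(H)-1)^2$ is precisely what these bounds produce. This stage is analytic and probabilistic, and does not use divisibility.

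The heart of the proof is the integral-from-fractional conversion, carried out by iterative absorption. First I would fix a small absorbing structure $A \subseteq G$ with the property that for \emph{any} sufficiently sparse, $H$-divisible leftover graph $R$ on a designated small vertex set, $A \cup R$ admits an $H$-decomposition. Then I would set up a \emph{vortex}: a nested sequence $V(G) = U_0 \supseteq U_1 \supseteq \cdots \supseteq U_\ell$ with $|U_{i+1}| \approx \beta |U_i|$ and with each $G[U_i]$ still of high minimum degree. At step $i$ I would remove an almost-complete $H$-decomposition of the edges not yet pushed inward — feeding the fractional decomposition of stage two into a R\"odl-nibble / regularity-based randomized rounding — leaving only a sparse remainder inside $U_{i+1}$. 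Maintaining $H$-divisibility of the remainder at every step (so that the final remainder inside $U_\ell$ is itself $H$-divisible) then lets the absorber $A$ finish the decomposition.

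The main obstacle, and where the real work lies, is this conversion step: constructing an absorber that is simultaneously robust (it absorbs any admissible leftover, not one chosen in advance) and cheap (it uses so few edges that its removal does not spoil the degree condition), and then controlling the accumulation of error across the $\ell$ nibble rounds so that the minimum-degree and divisibility invariants survive all the way down to the smallest set $U_\ell$. The method cannot lower the degree requirement below $1 - 1/t$ because the threshold must simultaneously absorb the fractional rounding losses and respect the edge budget of the vortex. Matching the fractional threshold to $1 - 1/t$ with $t$ depending only on $\chi(H)$ is the second delicate point, since it requires the embedding of $H$ through its chromatic number to be efficient.
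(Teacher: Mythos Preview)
The paper does not prove Theorem~\ref{thm-H} at all: it is quoted verbatim as a result of Barber, K\"uhn, Lo and Osthus \cite{Barber} and used as a black box in the derivation of Theorem~\ref{thm-dense2}. So there is no ``paper's own proof'' to compare against.

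That said, your outline is a faithful high-level description of the method actually used in \cite{Barber}: the iterative absorption scheme with a vortex $U_0\supseteq\cdots\supseteq U_\ell$, a nibble/cover-down step at each level fed by a fractional $H$-decomposition, maintenance of $H$-divisibility of the leftover, and a robust absorber inside the final set. Your identification of where the threshold $1-1/t$ comes from (fractional decomposition bound in terms of $\chi(H)$, plus room for the absorbing gadgets governed by $|E(H)|$) is also on target. If you intend this as a self-contained proof rather than a pointer to \cite{Barber}, the genuine gaps are the ones you flag yourself: you would still need to (i) actually build the absorber for arbitrary $H$-divisible leftovers, (ii) carry out the cover-down lemma that pushes uncovered edges into $U_{i+1}$ while preserving the minimum-degree and divisibility invariants, and (iii) prove the fractional $H$-decomposition bound with the stated dependence on $\chi(H)$. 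None of these is routine, and each occupies a substantial portion of \cite{Barber}; as written your proposal is a correct plan but not a proof.
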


Assume $G$ is a graph of large order and 
with minimum degree $\delta(G) \ge 0.999 |V(G)|$. If $|E(H)|$ divides $|E(G)|$,  then $G$ decomposes into copies of $H$ and Theorem \ref{thm-dense2} follows from Lemma \ref{lem-cover}. Otherwise, the same argument as the proof of Theorem \ref{dense} shows that $G$ can be decomposed into at most 6 copies of triangles and copies of $H$, and hence again 
Theorem \ref{thm-dense2} follows from Lemma \ref{lem-cover}. \qed

\begin{figure}[!htb]
 	\centering
 	 	\includegraphics[scale=0.5]{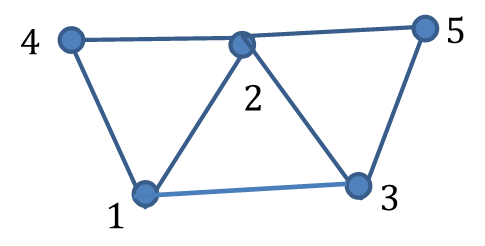}
 	\caption{The graph $H$. }\label{fig1}
 \end{figure}

\end{document}